\def\@maketitle{%
  \newpage
  {\vspace*{-11ex}%
  \small\noindent}
  \null
  \vskip 5em%
  \begin{flushleft}%
    {\LARGE \bf\@title \par}
    \vskip 1.5em%
       {\large\bf \lineskip .5em \@author \par}%
    \vskip 1.0em%
  \end{flushleft}%
  \par
  \vskip 1.5em}
\renewcommand\section{\@startsection {section}{1}{\z@}%
                                   {-3.5ex \@plus -1ex \@minus -.2ex}%
                                   {1.5ex \@plus.2ex}%
                                   {\normalfont\Large\bfseries}}
\renewcommand\subsection{\@startsection{subsection}{2}{\z@}%
                                     {-3.0ex\@plus -2ex \@minus -.2ex}%
                                     {1.0ex \@plus .2ex}%
                                     {\normalfont\large\bfseries}}
\renewcommand\subsubsection{\@startsection{subsubsection}{3}{\z@}%
                                     {-2.5ex\@plus -2ex \@minus -.2ex}%
                                     {0.9ex \@plus .2ex}%
                                     {\normalfont\normalsize\bfseries}}
\renewcommand\paragraph{\@startsection{paragraph}{4}{\z@}%
                                     {-2.0ex \@plus1ex \@minus.2ex}%
                                     {-1em}%
                                     {\normalfont\normalsize\bfseries}}
\renewcommand\subparagraph{\@startsection{subparagraph}{5}{\parindent}%
                                       {-1.5ex \@plus1ex \@minus .2ex}%
                                       {-1em}%
                                      {\normalfont\normalsize\bfseries}}
\def\@evenhead{\footnotesize\thepage\hfil\slshape\leftmark}%
\def\@oddhead{\footnotesize{\slshape\rightmark}\hfil\thepage}%
\numberwithin{equation}{section} \numberwithin{figure}{section}
\numberwithin{table}{section}
\renewcommand{\@makecaption}[2]{\begin{quote}
\footnotesize {\bf #1}~#2
\end{quote}}
\newenvironment{summary}{\vskip\baselineskip \noindent\small\bf Summary\\ \rm}%
{\vskip\baselineskip}
\newenvironment{proof}{{\vskip\baselineskip\noindent\textbf{Proof:}}}%
{\hspace*{.1pt}\hspace*{\fill}\BOX\vskip\baselineskip}
\newcommand{\BOX}{\ensuremath\Box}
\newtheorem{theorem}{Theorem }[section]
\newtheorem{corollary}[theorem]{Corollary}
\newtheorem{definition}[theorem]{Definition}
{\theorembodyfont{\rmfamily}}
{\theorembodyfont{\rmfamily}\newtheorem{example}[theorem]{Example}}
{\theorembodyfont{\rmfamily}}
\newtheorem{lemma}[theorem]{Lemma}
\newtheorem{proposition}[theorem]{Proposition}
{\theorembodyfont{\rmfamily}\newtheorem{remark}[theorem]{Remark}}
{\theorembodyfont{\rmfamily}}
\title{Statistical likelihood methods in finance}
\author{Arnold Janssen${}^{*}$, Martin Tietje}
\newcommand{\ind}{1\!\! 1}
\begin{document}
\maketitle\thispagestyle{empty}

Heinrich-Heine-Universit\"at, Universit\"atsstr. 1, 40225 D\"usseldorf, Germany\\
$~~~~~~\ $Emails: janssena@math.uni-duesseldorf.de, tietje@math.uni-duesseldorf.de

\begin{summary}
It is known from previous work of the authors that non-negative arbitrage free price processes in finance can be described in terms of filtered likelihood 
processes of statistical experiments and vice versa. The present paper summarizes and outlines some similarities between finance and the statistical likelihood 
theory of Le Cam. Options are linked to statistical tests of the underlying experiments. In particular, some price formulas for options are expressed by the 
power of related tests. In special cases the dynamics of power functions for filtered likelihood processes can be used to establish trading strategies which lead 
to formulas for the Greeks $\Delta$ and $\Gamma$. Moreover statistical arguments are then used to establish a discrete approximation of continuous time trading 
strategies. It is explained that It\^{o} type financial models correspond to hazard based survival models in statistics. Also price processes given by a 
geometric fractional Brownian motion have a statistical counterpart in terms of the likelihood theory of Gaussian statistical experiments.
\end{summary} 
 
\renewcommand{\thefootnote}{}
\footnotetext{\hspace*{-.51cm}AMS 1991 subject classification: Primary: 91B02; Secondary: 62B15, 91B24\\ 
Key words and phrases: statistical experiment, filtered likelihood ratio process, pricing formula, hazard rate, hedging strategy, fractional Brownian motion, 
local asymptotic Wiener\\
*) corresponding author}

\section{Introduction}\label{section1}
In this paper we consider a positive arbitrage free financial model as in Janssen and Tietje (2013) using their notation. To be more precise, let 
$(X_t^i)_{t \in [0,T]}$, $1 \le i \le d$ be $d$ adapted positive discounted price processes with filtration $(\mathcal{F}_t)_{t \in [0,T]}$, 
finite time horizon $T$ and real world measure $P$. By Janssen and Tietje we have the following useful equivalence:
\begin{proposition}\label{equiv}
Let $Q$ be a probability measure equivalent to $P$. The following assertions are equivalent:
\begin{enumerate}
   \item[(1)] There are probability measures $Q_1, ..., Q_d$ satisfying
 \begin{equation}
    \frac{dQ_{i|\mathcal{F}_t}}{dQ_{|\mathcal{F}_t}} = \frac{X^i_t}{X^i_0}, \ \ \ t \in [0,T], 
\textit{ and}~ Q_i \ll Q \textit{ for all}~ 1 \le i \le d.
 \end{equation}
   \item[(2)] $Q$ is a martingale measure, i.e. $(X_t^i)_{t \in [0,T]}$ is a $Q$-martingale for each $1 \le i \le d$.
\end{enumerate}
\end{proposition}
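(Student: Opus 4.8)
The plan is to recognise that, for each $i$, assertion~(1) says exactly that the strictly positive adapted process $L^i_t:=X^i_t/X^i_0$ is the filtered likelihood ratio (density process) of some probability measure $Q_i\ll Q$, and then to use the elementary characterisation that the density processes of measures dominated by $Q$ are precisely the nonnegative $(Q,(\mathcal F_t))$-martingales $L$ with $L_0=1$. Once this is made explicit, both implications reduce to a one-line translation between the phrases ``density process'' and ``$Q$-martingale'', via the tower property / Bayes' rule. Note that the reference measure $P$ itself plays no role beyond providing the ambient null sets.

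For $(2)\Rightarrow(1)$ I would proceed as follows. If each $X^i$ is a $Q$-martingale, then $L^i_t=X^i_t/X^i_0$ is a nonnegative $Q$-martingale with $L^i_0=1$, so $L^i_T\ge 0$ and $E_Q[L^i_T]=1$. Define $Q_i$ on $\mathcal F_T$ by $dQ_i=L^i_T\,dQ$. Then $Q_i$ is a probability measure with $Q_i\ll Q$, and for $t\in[0,T]$ and $A\in\mathcal F_t$ the martingale property gives $Q_i(A)=E_Q[\ind_A L^i_T]=E_Q[\ind_A E_Q[L^i_T\mid\mathcal F_t]]=E_Q[\ind_A L^i_t]$, so $dQ_{i\mid\mathcal F_t}/dQ_{\mid\mathcal F_t}=L^i_t=X^i_t/X^i_0$. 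This is exactly~(1).

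For $(1)\Rightarrow(2)$ I would set $L^i_t:=dQ_{i\mid\mathcal F_t}/dQ_{\mid\mathcal F_t}=X^i_t/X^i_0$ and verify directly that $L^i$ is a $(Q,(\mathcal F_t))$-martingale: for $s\le t$ and $A\in\mathcal F_s\subseteq\mathcal F_t$ one has $\int_A L^i_t\,dQ=Q_i(A)=\int_A L^i_s\,dQ$, hence $E_Q[L^i_t\mid\mathcal F_s]=L^i_s$, while $E_Q[L^i_t]=Q_i(\Omega)=1<\infty$ secures integrability. Since $X^i_0$ is $\mathcal F_0$-measurable and strictly positive (a positive constant when $\mathcal F_0$ is trivial), $X^i_t=X^i_0L^i_t$ is $Q$-integrable and $E_Q[X^i_t\mid\mathcal F_s]=X^i_0E_Q[L^i_t\mid\mathcal F_s]=X^i_0L^i_s=X^i_s$; applying this for every $i$ yields~(2).

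I do not expect a genuine obstacle: the argument is pure bookkeeping resting on the characterisation of density processes as normalised nonnegative martingales. The points deserving a little care are making the filtered density $L^i_t$ into a well-defined consistent family, using (respectively producing) the absolute continuity $Q_i\ll Q$ on the appropriate $\sigma$-field $\mathcal F_T$, and keeping track of the relevant $Q$-integrability $E_Q[X^i_t]<\infty$ — automatic from the martingale hypothesis in the first direction and from $Q_i$ being a probability measure in the second. Stating the density-process characterisation cleanly at the outset is the one structural step; everything after it is a short computation.
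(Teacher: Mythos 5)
Your argument is correct and is exactly the standard density-process/martingale correspondence that underlies this equivalence; the paper itself gives no proof here but simply cites Janssen and Tietje (2013), Prop.~1, where the same bookkeeping (define $Q_i$ by $dQ_i=(X^i_T/X^i_0)\,dQ$ in one direction, read off the martingale property of the restricted densities in the other) is carried out. The only point worth keeping explicit is the one you already flag: $X^i_0$ must be treated as a strictly positive $\mathcal F_0$-measurable quantity (a constant $s^i_0$ in the paper's setting), which is what makes division by $X^i_0$ harmless for both the conditional-expectation computation and the integrability of $X^i_t=X^i_0L^i_t$.
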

Whenever \ref{equiv} holds $\{Q_1,...,Q_d, Q\}$ can be viewed as a stastical experiment which is then called financial 
experiment where the physical measure $P$ is often suppressed. 
Thus, positive arbitrage free financial asset models can be translated in terms of Le Cam's theory of 
statistical experiments. They correspond one to one to so called filtered financial experiments given by 
likelihood processes. Since likelihood processes are well studied in statistics various similarities between finance 
and statistics could be derived, in particular for option prices and power functions of statistical tests, 
completeness of financial markets and complete statistical experiment among different other topics. Also discrete approximations 
of price processes and option prices were linked to the convergence of statistical experiments and power functions of tests.\\
In this paper we will present further consequences and results of that approach. Janssen and Tietje (2013) offered an
alternative statistical point of view for price formulas for various payoff options $H$. Consider for instance a European 
call option $H_C$ with strike price $K$ with a bond given by $S_t^0=\exp\left(\int_0^t \rho(u) du\right)$ and a deterministic interest rate 
$\rho:[0,T] \rightarrow \mathbb{R}$. If $S_t^i$ denotes the $i$-th asset then $X_t^i:=\frac{S_t^i}{S_t^0}$ holds. By Janssen and Tietje, Example 5, the option 
price at time $t$ with initial value $s^1_t$ of the asset can be represented by
\begin{equation}\label{price0}
 p_Q(H_C,s_t^1,t)=s_t^1 E_{Q'_1(t)}\left( \Phi_t\left( \frac{dQ'_1(t)}{dQ},s_t^1\right) \right)
\end{equation}
$~~~~~~~~~~~~~~~~~~~~~~~~~~~~~~~~~~~~~~~~~~~~~~~~~~~~~~~~~~~-\exp\left( -\int_t^T \rho(u)du\right)K 
 E_Q\left( \Phi_t\left( \frac{dQ'_1(t)}{dQ},s_t^1\right) \right)$
where $E_{Q_1'}(\Phi_t)$ and $E_Q(\Phi_t)$ are the power of suitable tests $\Phi_t$ for the corresponding 
experiments at time $t$, see \eqref{hct} - \eqref{delta} below for more information. For continuous time models it can be shown that the decomposition of the 
price \eqref{price0} allows a nice interpretation of the ``Greeks $\Delta$ and $\Gamma$''. Under some regularity assumptions the 
$\Delta$ is just the power of $\Phi_t$ under the alternative $Q_1'(t)$
\begin{equation}
 E_{Q_1'(t)}(\Phi_t) = \xi_1(t) \quad \textnormal{(`` } \Delta \textnormal{ Delta'')}
\end{equation}
which coincides with the hedging strategy $\xi_1(t)$ of the required portion of the asset at time $t$. That
result can be used to establish discrete approximations of continuous time hedging strategies. The result is based on statistical
 arguments. The convergence of likelihood processes implies the convergence of accompanying power functions of tests. In particular, 
 each of the power terms of \eqref{price0} are convergent in various cases.\\
 
 Section \ref{section3} outlines the connection between It\^{o} type financial models and hazard based survival models. The 
 volatility corresponds to hazard rate derivatives which are used in survival analysis. Hazards are time dependent failure 
 rates which serve as main parameters in health science and insurance. This connection opens the door for a comparison of 
 financial models and hazard based models in medicine, see Andersen et al. (1993). Here we offer a functional limit theorem 
 for positive price processes (likelihood processes). The statistical local asymptotic Wiener (LAW) property is here of importance. 
 Finally, some auxiliary material about local asymptotic mixed normal models (LAMN) in finance are presented. Also experiments with 
 a geometric fractional Brownian motion part are known in statistics. Thus, the likelihood theory is also of interest beyond arbitrage free models.\\

\section{Option prices, hedging strategies and the power of statistical tests}\label{section2}

Suppose that always a martingale measure $Q$ exists. Consider the payoff $H$ at time $T$
\begin{equation}\label{h}
 H:= (S_T^1 - K) \Phi (\mathbb{S}), \quad 0 \le \Phi \le 1,
\end{equation}
with 
$\mathbb{S}=\left( \frac{dQ_{i|\mathcal{F}_t}}{dQ_{|\mathcal{F}_t}}\right)_{t \le T}
=\left( \frac{S_t^i}{s_0^i \exp\left( \int_0^t \rho(u)du\right) }\right)_{t \le T}$
as in Janssen and Tietje (2013). They introduced a testing problem for the null hypothesis $\{Q\}$ versus the alternative $\{Q_1\}$ where 
$\Phi = \Phi(\mathbb{S})$ serves as a statistical test. The $Q$-price of $H$ at time $t=0$ is defined by
\begin{equation}\label{price1}
 p_Q(H):= E_Q((S_T^0)^{-1}H).
\end{equation}
Note that at present nothing is said about the uniqueness of the martingale measure $Q$. However, under additional assumptions like the completeness of the 
market the value \eqref{price1} yields the unique option price of $H$, see Karatzas and Shreve (1991), p. 378.

\begin{lemma}[Janssen and Tietje (2013), Prop. 2]
${}$\\
 The $Q$-price of $H$ is a linear combination of the $Q$-level $E_Q(\Phi)$ of the test $\Phi=\Phi(\mathbb{S})$ and its power $E_{Q_1}(\Phi)$ namely
 \begin{equation}\label{pqh}
 p_Q(H)=s_0^1 E_{Q_1}\left( \Phi \right)-\exp\left( -\int_0^T \rho(u)du\right)K E_Q\left( \Phi \right).
\end{equation}
\end{lemma}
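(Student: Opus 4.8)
The plan is to unwind the definition \eqref{price1} of the $Q$-price and split it by linearity of the expectation. Inserting $H$ from \eqref{h} gives
\begin{equation}
p_Q(H)=E_Q\big((S_T^0)^{-1}(S_T^1-K)\Phi(\mathbb{S})\big)=E_Q\big((S_T^0)^{-1}S_T^1\,\Phi(\mathbb{S})\big)-K\,E_Q\big((S_T^0)^{-1}\Phi(\mathbb{S})\big),
\end{equation}
and one first records that both expectations are finite: since $0\le\Phi\le1$, the second is dominated by the constant $(S_T^0)^{-1}$, while $(S_T^0)^{-1}S_T^1=X_T^1$ is $Q$-integrable because $(X_t^1)_{t\le T}$ is a $Q$-martingale (a martingale measure is assumed to exist), so $E_Q(X_T^1)=X_0^1<\infty$.

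Next I would dispatch the $K$-term. Because $\rho$ is deterministic, $(S_T^0)^{-1}=\exp\!\big(-\int_0^T\rho(u)\,du\big)$ is a constant and factors out of the expectation, producing $-\exp\!\big(-\int_0^T\rho(u)\,du\big)K\,E_Q(\Phi)$, i.e. exactly the second summand of \eqref{pqh}. For the first term I would use $(S_T^0)^{-1}S_T^1=X_T^1$ together with Proposition~\ref{equiv}(1), which gives $X_T^1/X_0^1=dQ_{1|\mathcal{F}_T}/dQ_{|\mathcal{F}_T}$, and note $X_0^1=S_0^1/S_0^0=s_0^1$ since $S_0^0=\exp(0)=1$. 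Hence the first term equals $s_0^1\,E_Q\big(\tfrac{dQ_{1|\mathcal{F}_T}}{dQ_{|\mathcal{F}_T}}\,\Phi(\mathbb{S})\big)$. Since $\mathbb{S}$ is adapted, $\Phi(\mathbb{S})$ is $\mathcal{F}_T$-measurable, and because $Q_1\ll Q$ the process $\tfrac{dQ_{1|\mathcal{F}_t}}{dQ_{|\mathcal{F}_t}}$ is the genuine Radon--Nikodym density of $Q_1$ with respect to $Q$ on $\mathcal{F}_T$; the change-of-measure identity then yields $E_Q\big(\tfrac{dQ_{1|\mathcal{F}_T}}{dQ_{|\mathcal{F}_T}}\,\Phi(\mathbb{S})\big)=E_{Q_1}(\Phi)$. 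Adding the two pieces gives \eqref{pqh}.

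There is no serious obstacle here — the statement is essentially a bookkeeping consequence of linearity of the expectation and the equivalence in Proposition~\ref{equiv}. The only points that genuinely require a word of care are (i) that $\Phi(\mathbb{S})$ is measurable with respect to the terminal $\sigma$-algebra $\mathcal{F}_T$, so that the passage from $Q$ to $Q_1$ via the likelihood ratio at time $T$ is legitimate, and (ii) the identifications $(S_T^0)^{-1}S_T^1=X_T^1$ and $X_0^1=s_0^1$ together with the integrability check above, which guarantee that the splitting into two finite expectations is valid.
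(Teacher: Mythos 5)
Your argument is correct and is essentially the intended one: the paper does not reproduce a proof but imports the statement from Janssen and Tietje (2013), Prop.~2, where the identity is obtained exactly as you do --- split $H$ by linearity, pull out the deterministic discount factor $(S_T^0)^{-1}$, and convert $E_Q\bigl(X_T^1\,\Phi(\mathbb{S})\bigr)=s_0^1\,E_{Q_1}(\Phi)$ via the likelihood ratio $X_T^1/X_0^1=dQ_{1|\mathcal{F}_T}/dQ_{|\mathcal{F}_T}$ from Proposition~\ref{equiv}. Your attention to the $\mathcal{F}_T$-measurability of $\Phi(\mathbb{S})$ and to integrability is appropriate but not a point of divergence.
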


In comparison with \eqref{price1} the power formula \eqref{pqh} may have some computational advantages.

\begin{remark}[Applications of the statistical approach of the price \eqref{pqh}]
 ${}$\\Suppose that $H$ is a complicated option where no explicit price is known. Then $p_Q(H)$ can be calculated by Monte Carlo experiments via \eqref{price1} 
 or \eqref{pqh}. Observe that the 
 $Q$-variance of the typically unbounded option $H$ can be large at least for a large time horizon $T$. That variance decreases the quality of the Monte Carlo 
 approach for the right hand side of \eqref{price1}. We suggest to carry out two Monte Carlo experiments for $E_{Q_1}(\Phi)$ and $E_Q(\Phi)$ in \eqref{pqh}. It is 
 our experience that the accuracy of the Monte Carlo approximation is better here as in \eqref{price1} for larger $Var_Q(H)$ since $0 \le \Phi \le 1$ controls 
 the variance in \eqref{pqh}.
\end{remark}

Next we will explain the meaning and the consequences of the power decomposition of $p_Q(H)$ in \eqref{pqh}. For these reasons we refer to the dynamics of the 
option price as function of the time $t$, see Janssen and Tietje, Example 5. For simplicity we consider the European call
\begin{equation}
 H_C = (S_T^1 - K) \Phi_C \left(\frac{dQ_1}{dQ} \right)
\end{equation}
with $\Phi_C(x) = \ind_{\{x > c\}}$ and $c = \frac K{s_0^1} exp\left( - \int_0^t \rho(u) du\right)$. As stated in Janssen and Tietje there exists another 
measure $Q_1'(t) \ll Q$ such that the updated price at time $0 \le t \le T$ 
\begin{equation}
 \frac{X^i_T}{X^i_t}= \frac{dQ'_i(t)}{dQ} \textnormal{ with } \frac{dQ_1}{dQ} = \frac{dQ_{1|\mathcal{F}_t}}{dQ_{|\mathcal{F}_t}} \frac{dQ'_1(t)}{dQ}
\end{equation}
is a likelihood ratio. The updated option $H$ of \eqref{h} given $S_t^1=s_t^1$ has now the form
\begin{equation}\label{hct}
 H_{C,t}=(S_T^1 - K) \Phi_t \left(\frac{X_T^1}{X_t^1}, s_t^1 \right)
\end{equation}
with 
\begin{eqnarray*}
 \Phi_t\left( \frac{X_T^1}{X_t^1},s_t^1\right)
&=&\Phi_C\left(  \frac{s_t^1}{s_0^1 \exp\left( \int_0^t \rho(u)du\right) } \cdot\frac{dQ'_1(t)}{dQ} \right)\\
&=&\ind\left( \frac{dQ'_1(t)}{dQ} > \frac K{s_t^1} exp\left( - \int_t^T \rho(u) du \right) \right).
\end{eqnarray*}

\begin{theorem}\label{call}
 (a) The $Q$-price process at time $t \le T$ of the European call $H_C$ with initial value $S_t^1 = s_t^1$ is given by
 \begin{equation}\label{pqhct}
 p_Q(H_C,s_t^1,t)=s_t^1 E_{Q'_1(t)}\left( \Phi_t\left( \frac{dQ'_1(t)}{dQ},s_t^1\right) \right)
\end{equation}
$~~~~~~~~~~~~~~~~~~~~~~~~~~~~~~~~~~~~~~~~~~~~~~~~~~~~~~~~~~~-\exp\left( -\int_t^T \rho(u)du\right)K 
 E_Q\left( \Phi_t\left( \frac{dQ'_1(t)}{dQ},s_t^1\right) \right).$
\end{theorem}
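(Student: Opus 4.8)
The plan is to read Theorem~\ref{call} as the pricing formula \eqref{pqh} of the Lemma ``transported'' from time $0$ to time $t$. The material recalled just above \eqref{hct} provides, for each $t\in[0,T]$, a probability measure $Q_1'(t)\ll Q$ with $\tfrac{dQ_1'(t)}{dQ}=\tfrac{X_T^1}{X_t^1}$, and it presents the updated option $H_{C,t}$ of \eqref{hct} together with the pair $\{Q_1'(t),Q\}$ as a financial experiment of exactly the form to which the Lemma applies, under the dictionary $0\mapsto t$, $s_0^1\mapsto s_t^1$, $S_0^0\mapsto S_t^0$, $Q_1\mapsto Q_1'(t)$, $\Phi\mapsto\Phi_t$ and $\int_0^T\rho\mapsto\int_t^T\rho$. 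I would therefore apply the Lemma in this updated experiment and read off \eqref{pqhct} directly.

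To make the dictionary literal, two points need checking. First, the test $\Phi_C\bigl(\tfrac{dQ_1}{dQ}\bigr)$ carried by $H_C$ agrees, on $\{S_t^1=s_t^1\}$, with the updated test $\Phi_t\bigl(\tfrac{dQ_1'(t)}{dQ},s_t^1\bigr)=\ind\bigl(\tfrac{dQ_1'(t)}{dQ}>\tfrac{K}{s_t^1}\exp(-\int_t^T\rho(u)\,du)\bigr)$ displayed after \eqref{hct}; this follows from the factorization $\tfrac{dQ_1}{dQ}=\tfrac{dQ_{1|\mathcal F_t}}{dQ_{|\mathcal F_t}}\,\tfrac{dQ_1'(t)}{dQ}$ and $\tfrac{dQ_{1|\mathcal F_t}}{dQ_{|\mathcal F_t}}=\tfrac{X_t^1}{X_0^1}=\tfrac{s_t^1}{s_0^1\exp(\int_0^t\rho(u)\,du)}$ on the conditioning event (equivalently, both events coincide with $\{S_T^1>K\}$). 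Second, since the bond $S^0$ is deterministic, $S_T^1=\tfrac{S_T^0}{S_t^0}\,s_t^1\,\tfrac{dQ_1'(t)}{dQ}=\exp\bigl(\int_t^T\rho(u)\,du\bigr)\,s_t^1\,\tfrac{dQ_1'(t)}{dQ}$ on $\{S_t^1=s_t^1\}$, so that the discounted updated payoff splits as
$$
\frac{S_t^0}{S_T^0}\,(S_T^1-K)\,\Phi_t=s_t^1\,\frac{dQ_1'(t)}{dQ}\,\Phi_t-\exp\Bigl(-\int_t^T\rho(u)\,du\Bigr)K\,\Phi_t .
$$
Taking $Q$-expectations, the first term contributes $s_t^1\,E_{Q_1'(t)}(\Phi_t)$ because $\tfrac{dQ_1'(t)}{dQ}$ is the Radon--Nikodym derivative of $Q_1'(t)$ with respect to $Q$, and the second contributes $-\exp(-\int_t^T\rho(u)\,du)K\,E_Q(\Phi_t)$; this is \eqref{pqhct}.

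The only real obstacle is the legitimacy of this ``transport'': one must know that pricing the call at time $t$ given $S_t^1=s_t^1$ genuinely amounts to pricing $H_{C,t}$ in the experiment $\{Q_1'(t),Q\}$, i.e.\ that integrating against the factor $\tfrac{dQ_1'(t)}{dQ}$ effects the change of measure from $Q$ to $Q_1'(t)$ in the conditional model. This is exactly the construction of $Q_1'(t)$ and the identity $\tfrac{X_T^1}{X_t^1}=\tfrac{dQ_1'(t)}{dQ}$ recalled from Janssen and Tietje (2013) immediately before \eqref{hct} (and already underlying \eqref{price0}); once it is invoked, no further work is needed, and the remaining steps are the bookkeeping of the deterministic discount factor and the elementary splitting above.
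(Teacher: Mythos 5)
Your derivation is correct, but be aware that for part (a) the paper offers no argument of its own: its proof section reads only ``Part (a) is proved in Janssen and Tietje,'' so there is nothing here to compare against line by line. What you have produced is a self-contained reconstruction of the cited proof, and it is the natural one: the identity $S_T^1=\frac{S_T^0}{S_t^0}\,s_t^1\,\frac{dQ_1'(t)}{dQ}$ on $\{S_t^1=s_t^1\}$, the splitting of the discounted payoff into $s_t^1\,\frac{dQ_1'(t)}{dQ}\,\Phi_t-\exp\bigl(-\int_t^T\rho(u)\,du\bigr)K\,\Phi_t$, and the absorption of the factor $\frac{dQ_1'(t)}{dQ}$ into a change of measure from $Q$ to $Q_1'(t)$ constitute exactly the time-$t$ analogue of the argument behind the Lemma (Prop.~2 of Janssen and Tietje) that yields \eqref{pqh}. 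You also correctly isolate the one step that cannot be verified from this paper alone, namely that pricing at time $t$ given $S_t^1=s_t^1$ amounts to an unconditional $Q$-expectation in the updated experiment $\{Q_1'(t),Q\}$ with $\frac{dQ_1'(t)}{dQ}=\frac{X_T^1}{X_t^1}$; that is precisely the construction recalled from Janssen and Tietje just before \eqref{hct}, and invoking it is legitimate rather than a gap. One small remark: the paper's constant $c=\frac{K}{s_0^1}\exp\bigl(-\int_0^t\rho(u)\,du\bigr)$ should read $\int_0^T$ (so that $\Phi_C\bigl(\frac{dQ_1}{dQ}\bigr)=\ind_{\{S_T^1>K\}}$); with that correction your observation that the original and updated tests both coincide with $\ind_{\{S_T^1>K\}}$ on the conditioning event is exactly right and consistent with the display following \eqref{hct}.
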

(b) Suppose that for all $t$ the likelihood distribution $\mathcal{L}\left(\frac{dQ'_1(t)}{dQ}\Big|Q\right)$ has a continuous Lebesgue density on $(0,\infty)$. 
Then the ``delta''($\Delta$) of the price process 
\begin{equation}\label{delta}
 \frac d{dx} p_Q(H_C,x,t)_{|x=s_t^1} = E_{Q_1'(t)}\left( \Phi_t\left( \frac{dQ'_1(t)}{dQ},s_t^1\right) \right)
\end{equation}
is just the $Q'_1(t)$ power of the test $\Phi_t$.

\begin{remark}
 (a) Under additional assumptions, for instance for complete financial models with It\^{o} type price processes, the delta \eqref{delta} gives just the 
 investment part $\xi_1(t)s_t^1$ at time $t$ of the unique self financing hedging strategy $(\xi_0(t), \xi_1(t))$, see F\"ollmer and Schied (2004). Under this 
 condition the hedging strategy for $H_C$ is completely given by the power functions of our tests
 \begin{equation}\label{hedge}
  \xi_1(t) = E_{Q_1'(t)}\left( \Phi_t\left( \frac{dQ'_1(t)}{dQ},s_t^1\right) \right).
 \end{equation}
Thus, the statistical quantities of the price decomposition have a concrete meaning for the hedging strategy.\\
(b) Of course \eqref{hedge} is well known for the famous Black-Scholes price for the European call.\\
(c) It turned out that formula \eqref{hedge} does not hold for other options in general or the price of $H_C$ under a discrete model like the 
Cox-Ross-Rubinstein model.
\end{remark}
${}$\\
\textbf{Proof of Theorem \ref{call}:}
Part (a) is proved in Janssen and Tietje.\\
(b) Let $f_0: (0,\infty) \rightarrow [0,\infty)$ denote the continuous density of $\mu_1 = \mathcal{L}\left(\frac{dQ'_1(t)}{dQ}\Big|Q\right)$. Since 
$Q_1'(t) \ll Q$ holds we have $\frac{d\mu_2}{d\mu_1}(y)=y$ for $\mu_2 = \mathcal{L}\left(\frac{dQ'_1(t)}{dQ}\Big|Q_1'(t)\right)$ which is frequently used in 
statistics and Le Cam's theory of experiments. Thus, $\mu_2$ has the density $y \mapsto y f_0(y)$. If we put $c_t = K exp \left( - \int_t^T \rho(u) du \right)$ 
the price formula \eqref{pqhct} reads as
\begin{equation*}
 p_Q(H_C,s_t^1,t)=s_t^1 \int \ind \left( y > \frac{c_t}{s_t^1} \right) d\mu_2
\end{equation*}
$~~~~~~~~~~~~~~~~~~~~~~~~~~~~~~~~~~~~~~~~~~~~~~~~~~~~~~~~~~~~~~~~~~~~~-\exp\left( -\int_t^T \rho(u)du\right)K 
 \int \ind \left( y > \frac{c_t}{s_t^1} \right) d\mu_1$
 and
 \begin{align*}
 &\frac d{dx} p_Q(H_C,x,t) - E_{Q_1'(t)}\left( \Phi_t\left( \frac{dQ'_1(t)}{dQ},x\right) \right)\\
 &=x \frac d{dx} \int_{\frac{c_t}x}^{\infty} y f_0(y)dy - exp\left( -\int_t^T \rho(u)du\right)K \frac d{dx}\int_{\frac{c_t}x}^{\infty} f_0(y)dy\\
 &=\left[ x \frac{c_t}x f_0\left(\frac{c_t}x \right)-c_t f_0\left(\frac{c_t}x\right)\right] \frac{c_t}{x^2} = 0.
 \end{align*}
\hspace*{\fill}\begin{small}$\square$\end{small}

\begin{remark}[Power identity]
${}$\\
 (a) In statistical terms the test $\Phi_t(\cdot,\cdot)$ is a Neyman Pearson test at level $E_Q\left( \Phi_t\left( \frac{dQ'_1(t)}{dQ},s_t^1\right) \right)$ for 
 the null hypothesis $\{Q\}$. The following well known identity, see for instance Krafft and Plachky (1970), has now an interpretation for the hedging strategy of 
 the European call. When \eqref{hedge} holds we have
 \begin{align*}
  \xi_1(t) &= E_{Q_1'(t)}\left( \Phi_t\left( \frac{dQ'_1(t)}{dQ},s_t^1\right) \right)\\
  &= inf_{k \ge 0} \left\{ k E_Q\left( \Phi_t\left( \frac{X_T^1}{X_t^1},s_t^1\right) \right) + 
  \int \left( \frac{X_T^1}{X_t^1}-k\right)^+ dQ \right\},
 \end{align*}
where $\dfrac{dQ_1'(t)}{dQ} = \dfrac{X_T^1}{X_t^1}$.\\
 (b) Under the present assumptions we have the power relation for the ``Greek'' $\Gamma$ 
 \begin{align*}
  \Gamma(x,t) &= \frac{d^2}{dx^2} p_Q(H_c,x,t) = \frac d{dx} E_{Q_1'(t)}\left( \Phi_t\left( \frac{dQ'_1(t)}{dQ},x\right) \right)\\
  &=\frac {K exp\left(-\int_t^T \rho(u)du\right)} x \frac d{dx} E_Q\left( \Phi_t\left( \frac{dQ'_1(t)}{dQ},x\right) \right).
 \end{align*}
\end{remark}

The power formula \eqref{hedge} enables us to establish an approximation or discretization of the underlying hedging strategy for the European call. This is due 
to the fact that the convergence of statistical experiments implies the convergence of the power of Neyman Pearson tests.

\begin{remark}[Convergence of hedging strategies in terms of power functions]
${}$\\
Suppose that the financial model of Theorem \ref{call} is complete with a unique hedging strategy $\xi_1(t)$ given by the power \eqref{hedge} of the European 
call.\\
(a) Often $\dfrac{X_T^1}{X_t^1}=\dfrac{dQ_1(t)}{dQ}$ can be approximated by discrete financial experiments, see Janssen and Tietje, sections 5, 6. As concrete 
example they considered a proper parametrization of the Cox-Ross-Rubinstein model. Thus, the related Neyman Pearson power converges to the unknown hedging 
strategy \eqref{hedge}.\\
(b) However, the Neyman Pearson power for the Cox-Ross-Rubinstein model may be no longer the hedging strategy in general for discrete models since the 
assumptions of Theorem \ref{call} are not fulfilled.
\end{remark}

\section{Volatility and hazard parameters in statistics}\label{section3}

In this section the connection between statistical survival models and It\^{o} type financial models is pointed out. In particular, we explain that the
volatility is connected to hazard rates and survival models which are well studied in statistics. The subsequent approach does not cover the most general
case. For convenience the connection is explained for deterministic volatilities in order to present the main idea. Of course more advanced survival models 
allow predictable hazard rates (volatilities), see Andersen, Borgan, Gill and Keiding (1993).\\
(I) Models given by independent components.\\
Throughout let $P_0$ be the uniform distribution on the unit interval. As statistical parameter space $\Theta$ we choose the set of measurable functions
$$\Theta = \left\{g:[0,1] \rightarrow \mathbb{R},~ g \textnormal{ bounded}, \int g^2 dP_0 < \infty, \int g dP_0 = 0\right\}.$$
For small enough $\vartheta \in \mathbb{R}$ each so called tangent $g \in \Theta$ defines a path of distributions via
\begin{equation}\label{path}
 \frac{dP_{\vartheta}}{dP_0}= 1 + \vartheta g.
\end{equation}
Consider now a finite number of tangents $g_1,...,g_d$. Let
\begin{equation}
 \Sigma:=(\sigma_{ij})_{i,j \le d}:=(\textnormal{Cov}_{P_0}(g_i,g_j))_{i,j \le d}
\end{equation}
denote the covariance matrix which is assumed to have full rank. For large $n$ we can now define a statistical experiment
$E_n=\{Q_{1,n},...,Q_{d,n},Q_{0,n}\}$ on $[0,1]^n$ by
\begin{equation}\label{likelihood1}
 \frac{dQ_{i,n}}{dQ_{0,n}}(x)=\prod_{j=1}^n \left(1 + \frac{g_i(x_j)}{\sqrt n}\right), \quad x=(x_1,...,x_n) \in [0,1]^n
\end{equation}
where $Q_{0,n} = P_0^n$.
Our time interval $[0,T]=[0,1]$ is now the unit interval which introduces the canonical filtration on $[0,1]^n$
\begin{equation}
 \mathcal{F}_{n,t} = \sigma ( \pi_s^i: s \le t, i \le n), \quad t \in [0,1],
\end{equation}
by the indicators $\pi_s^i(x) = \ind_{[0,s]}(x_i)$ of the canonical projection on $x_i$.\\
By Prop. \ref{equiv} the model defines price processes $\mathbb{X}_n=((X_{n,t}^i)_{t \in [0,1], i \le d})$
\begin{equation}\label{price}
 X_{n,t}^i:=\frac{dQ_{i,n |\mathcal{F}_{n,t}}}{dQ_{0,n |\mathcal{F}_{n,t}}}, \quad t\in [0,1], 1 \le i \le n.
\end{equation}
The asymptotics of this model will give more insight into the interaction of statistical models and It\^{o} type financial models. We begin with the celebrated 
well-known local asymptotic normality of Le Cam which is actually the central limit theorem for the sequence $(E_n)_n$ of statistical experiments.
For each $g_i$ we have the LAN expansion
\begin{equation}\label{lan}
 log \frac{dQ_{i,n}}{dQ_{0,n}}(x) - \left( Z_n(g_i)(x) - \frac{\sigma_i^2}2 \right) \longrightarrow 0
\end{equation}
in $Q_{0,n}$ probability where $Z_n(g_i)(x):= \frac 1{\sqrt n} \sum_{j=1}^n g_i(x_j)$ is the central sequence with $Q_{0,n}$ variance 
$\sigma_i^2 = \int g_i^2 dP_0$.\\
The result is based on the Taylor expansion $log(1+x) \approx x - \frac {x^2}2$. The limit model of $E_n$ is $E = (Q_1,...,,Q_d,Q)$ where $Q = N(0, \Sigma)$ 
and $Q_i = N(\Sigma e_i, \Sigma)$, $e_i$ the $i$-th unit vector on $\mathbb{R}^d$ with
\begin{equation}\label{dens1}
 \frac{dQ_i}{dQ}(x) = exp\left(x_i - \frac{\sigma_i^2}2 \right).
\end{equation}
Below we use the weak convergence of sequences of statistical experiments, see for instance Le Cam and Yang (2000) or Janssen and Tietje for the connection to 
financial experiments.
\begin{proposition}\label{conv1}
 We have weak convergence of the experiments $E_n \rightarrow E$, i.e. weak convergence of the distributions
 $$\mathcal{L}\left( \left( log \frac{dQ_{i,n}}{dQ_{0,n}} \right)_{i \le d} \Big| Q_{0,n}\right)
  \longrightarrow \mathcal{L}\left( \left( log \frac{dQ_i}{dQ} \right)_{i \le d} \Big| Q\right)
  $$
on $\mathbb{R}^d$.
\end{proposition}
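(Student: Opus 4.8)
The plan is to reduce the weak convergence of the experiments to the multivariate central limit theorem applied to the central sequences $Z_n(g_i)$, using the already displayed LAN expansion \eqref{lan} together with a Slutsky (converging-together) argument.

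First I would record that for $n > \max_i \|g_i\|_\infty^2$ the factors $1 + g_i(x_j)/\sqrt n$ are strictly positive, so the likelihood ratios in \eqref{likelihood1} are well defined and strictly positive $Q_{0,n}$-a.s., and $\int (1 + g_i/\sqrt n)\,dP_0 = 1$ (because $\int g_i\,dP_0 = 0$) makes each $Q_{i,n}$ a probability measure; in particular the log-likelihood vector takes values in $\mathbb{R}^d$ as required. Next, by \eqref{lan}, for every fixed $i$ the difference $\log \frac{dQ_{i,n}}{dQ_{0,n}} - \big(Z_n(g_i) - \sigma_i^2/2\big)$ tends to $0$ in $Q_{0,n}$-probability; since $d$ is finite, the $\mathbb{R}^d$-valued remainder vector also tends to $0$ in probability. (Should one wish to make \eqref{lan} self-contained, it follows from $\log(1+u) = u - u^2/2 + O(u^3)$ applied to $u = g_i(x_j)/\sqrt n$: the linear terms sum to $Z_n(g_i)$, the quadratic terms give $-\tfrac12 \cdot \tfrac1n\sum_j g_i(x_j)^2 \longrightarrow -\tfrac12\sigma_i^2$ by the law of large numbers, and the cubic remainder is bounded in absolute value by $n \cdot \|g_i\|_\infty^3 / (3 n^{3/2}) \longrightarrow 0$ --- this is the one spot where boundedness of the tangents is used.) By Slutsky's lemma (the converging-together lemma) it therefore suffices to show that, under $Q_{0,n}$, $(Z_n(g_i))_{i\le d} \longrightarrow N(0,\Sigma)$.

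For this step I would simply note that under $Q_{0,n} = P_0^n$ the coordinates $x_1,\dots,x_n$ are i.i.d.\ with law $P_0$, hence the random vectors $\big(g_1(x_j),\dots,g_d(x_j)\big)$, $1 \le j \le n$, are i.i.d.\ with mean vector $0$ and covariance matrix $\Sigma = (\mathrm{Cov}_{P_0}(g_i,g_j))_{i,j\le d}$, both finite because the $g_i$ are bounded. Since $(Z_n(g_i))_{i\le d} = n^{-1/2}\sum_{j=1}^n \big(g_i(x_j)\big)_{i\le d}$, the multivariate Lindeberg--L\'evy CLT --- or equivalently the Cram\'er--Wold device reducing to the one-dimensional CLT for $n^{-1/2}\sum_j \sum_i t_i g_i(x_j)$, whose $P_0^n$-variance equals $t^\top \Sigma t$ --- yields $(Z_n(g_i))_{i\le d} \longrightarrow N(0,\Sigma)$.

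Combining the two previous steps, $\mathcal{L}\big((\log \frac{dQ_{i,n}}{dQ_{0,n}})_{i\le d} \,\big|\, Q_{0,n}\big) \longrightarrow N\big(-\tfrac12(\sigma_1^2,\dots,\sigma_d^2),\,\Sigma\big)$. It remains to identify this limit with $\mathcal{L}\big((\log \frac{dQ_i}{dQ})_{i\le d}\,\big|\,Q\big)$: under $Q = N(0,\Sigma)$ formula \eqref{dens1} gives $\log \frac{dQ_i}{dQ}(x) = x_i - \sigma_i^2/2$, so the limiting vector is an affine image of an $N(0,\Sigma)$ variable with mean shift $-\tfrac12(\sigma_1^2,\dots,\sigma_d^2)$ and unchanged covariance $\Sigma$, i.e.\ exactly $N\big(-\tfrac12(\sigma_1^2,\dots,\sigma_d^2),\Sigma\big)$. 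This proves $E_n \rightarrow E$. The argument is otherwise routine; the only genuinely delicate point is the uniform negligibility of the Taylor remainder in \eqref{lan}, for which the boundedness of the $g_i$ is exactly what is needed.
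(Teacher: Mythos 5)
Your proposal is correct and follows essentially the same route as the paper: the authors likewise combine the LAN expansion \eqref{lan} with the multivariate CLT for $(Z_n(g_i))_{i\le d}$ via the Cram\'er--Wold device and the identification \eqref{dens1} of the limit law. You merely supply the details (Taylor remainder control using boundedness of the $g_i$, Slutsky, and the computation of the limiting mean shift $-\tfrac12(\sigma_1^2,\dots,\sigma_d^2)$) that the paper leaves implicit.
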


\begin{proof}
 Observe that $\mathcal{L}\left( \left( Z_n(g_i) \right)_{i \le d} | Q_{0,n}\right)$ is asymptotically $N(0,\Sigma)$ by the central limit theorem.
 Thus \eqref{dens1}, \eqref{conv1} and the Cram\'{e}r Wold device proves the result.
\end{proof}

Until now nothing is said about the influence of the filtration. It contains the sequential aspect which is typically modeled by hazards and survival 
aspects of the models when for instance life time data show up sequentially. To motivate this we go back to the path of distributions $P_{\vartheta}$ given in 
\eqref{path} where the tangent $g$ is given by $\frac d{d\vartheta} \frac{dP_{\vartheta}}{dP_0}(x)_{|\vartheta =0} = g(x)$. On the other hand the distributions 
$P_{\vartheta}$ can also be described by their hazard rates
\begin{equation}
 \lambda_{\vartheta}(x):= \frac{1+\vartheta g(x)}{P_{\vartheta}([x,1])}, \quad x \in [0,1],
\end{equation}
which is a time dependent failure rate.
It is well known that
\begin{equation} 
 \frac d{d\vartheta} \frac{\lambda_{\vartheta}(x)}{\lambda_0(x)}_{|\vartheta=0} = g(x) - \frac{\int_x^1 g dP_0}{1-x} = : R(g)(x)
\end{equation}
holds as stochastic derivative, see Efron and Johnstone (1990), Ritov and Wellner (1988) and Janssen (1994).
The function $R(g)$ is called the hazard rate derivative at $\vartheta = 0$ of \eqref{path}.
That operator $R$ is an isometry
\begin{equation}
 R : L_2^0(P_0) := \{ g \in L_2(g): \int g dP_0 = 0 \} \rightarrow L_2(P_0)
\end{equation}
w.r.t. the inner product of $L_2(P_0)$. Its inverse operator is
\begin{equation}
 L(\gamma)(x) = \gamma(x) - \int_0^x \frac{\gamma(u)}{1-u} du \textnormal{ for } \gamma \in L_2(P_0).
\end{equation}
Moreover, it is easy to see that for $\gamma = R(g)$
\begin{equation}\label{l}
 L(\gamma \ind_{[0,t]}) = E_{P_0}(g | \mathcal{F}_{1,t})
\end{equation}
holds which is a key observation. For these reasons the statistical models are now reparametrized by the hazard rate derivatives 
$\gamma = R(g) \in R(\Theta)$.\\
Accordingly, we have for $\gamma_i:=R(g_i)$ that the price process
\begin{equation}\label{pp1}
 X_{n,t}^i(x) = \frac{dQ_{i,n |\mathcal{F}_{n,t}}}{dQ_{0,n |\mathcal{F}_{n,t}}}(x) = \prod_{j=1}^n \left(1+\frac{L(\gamma_i \ind_{[0,t]})(x_j)}{\sqrt{n}}\right)
\end{equation}
is a filtered likelihood process given by the hazard quantities $\gamma_i$. The present right hand side of \eqref{pp1} is now used to establish the asymptotics 
of the price process $X_{n,t}^i$ for fixed $t$ via the appertaining statistical experiments 
$E_n(t):=(Q_{1,n|\mathcal{F}_{n,t}},...,Q_{d,n|\mathcal{F}_{n,t}},Q_{0,n})$ of \eqref{likelihood1}.

\begin{corollary}\label{conv2}
 For fixed $t$ we have weak convergence of the experiments $E_n(t) \rightarrow E(t)$ in the sense of Prop. \ref{conv1} where $E(t)=(Q_1(t),...,Q_d(t),Q(t))$ with
 \begin{equation}
  Q_i(t)=N(\Sigma(t)e_i,\Sigma(t)), \quad \Sigma(t) =( \sigma_{ij}(t) ) = \left(\int_0^t \gamma_i(u)\gamma_j(u)du\right).
 \end{equation}
\end{corollary}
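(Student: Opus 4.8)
The plan is to observe that, by \eqref{pp1}, the experiment $E_n(t)$ is itself of the form \eqref{likelihood1}, only with the tangents $g_1,\dots,g_d$ replaced by their truncations $\tilde g_i:=L(\gamma_i\ind_{[0,t]})$, and then to invoke (the proof of) Proposition \ref{conv1} for these new tangents. Indeed, \eqref{pp1} says exactly that $X_{n,t}^i(x)=\prod_{j=1}^n\bigl(1+\tilde g_i(x_j)/\sqrt n\bigr)$, and since this density is $\mathcal{F}_{n,t}$-measurable its $Q_{0,n}$-law coincides with its $Q_{0,n|\mathcal{F}_{n,t}}$-law; hence $E_n(t)$ is precisely the $n$-fold product experiment built from the paths $dP_\vartheta/dP_0=1+\vartheta\tilde g_i$.

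First I would verify that the truncated tangents are admissible, i.e.\ that $\tilde g_i\in\Theta$. By \eqref{l} we have $\tilde g_i=E_{P_0}(g_i\mid\mathcal{F}_{1,t})$ --- equivalently, by the explicit form of $L$, $\tilde g_i$ agrees with $g_i$ on $[0,t]$ and is constant on $(t,1]$ --- so $\int\tilde g_i\,dP_0=\int g_i\,dP_0=0$ and $\tilde g_i$ is bounded, whence $\tilde g_i\in\Theta\subset L_2(P_0)$. Next I would compute the limiting covariance. As the $\tilde g_i$ are centred, $\textnormal{Cov}_{P_0}(\tilde g_i,\tilde g_j)=\langle\tilde g_i,\tilde g_j\rangle_{L_2(P_0)}$; since $R$ is an isometry of $L_2^0(P_0)$ with inverse $L$, so that $R\tilde g_i=RL(\gamma_i\ind_{[0,t]})=\gamma_i\ind_{[0,t]}$, I obtain
$$\textnormal{Cov}_{P_0}(\tilde g_i,\tilde g_j)=\langle R\tilde g_i,R\tilde g_j\rangle_{L_2(P_0)}=\int_0^1\gamma_i(u)\gamma_j(u)\ind_{[0,t]}(u)\,du=\int_0^t\gamma_i(u)\gamma_j(u)\,du=\sigma_{ij}(t).$$

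With these two facts the proof of Proposition \ref{conv1} carries over verbatim to the tangents $\tilde g_1,\dots,\tilde g_d$: the LAN expansion \eqref{lan} reduces the problem to the central sequences $Z_n(\tilde g_i)=n^{-1/2}\sum_{j=1}^n\tilde g_i(x_j)$, which are jointly asymptotically $N(0,\Sigma(t))$ by the multivariate central limit theorem, and the Cram\'{e}r Wold device together with the limit density \eqref{dens1} then yields $E_n(t)\to E(t)$ with $E(t)=(N(\Sigma(t)e_1,\Sigma(t)),\dots,N(\Sigma(t)e_d,\Sigma(t)),N(0,\Sigma(t)))$, which is the asserted experiment. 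The substantive step is the covariance identity just derived; the one subtlety --- the mild obstacle --- is that $\Sigma(t)$ need not have full rank (e.g.\ $\Sigma(0)=0$), so the full-rank standing hypothesis on the covariance matrix is dropped here, which is harmless since it is nowhere used in the proof of Proposition \ref{conv1}, the limit simply being a possibly degenerate Gaussian shift experiment.
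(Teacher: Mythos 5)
Your argument is correct and follows essentially the same route as the paper's (very terse) proof: reduce $E_n(t)$ via \eqref{pp1} to the product experiment with tangents $E_{P_0}(g_i\mid\mathcal{F}_{1,t})=L(\gamma_i\ind_{[0,t]})$, compute the limiting covariance $\sigma_{ij}(t)=\int_0^t\gamma_i\gamma_j\,du$ through the isometry $R$ (equivalently $L$), and then invoke Proposition \ref{conv1}. Your added observation that $\Sigma(t)$ may be degenerate but that the full-rank hypothesis is never used is a sensible point the paper passes over in silence.
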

The proof follows from Proposition \ref{conv1}. The covariance matrix is given by $E_{P_0}(g_i|\mathcal{F}_{1,t})$ which can be expressed by the hazard 
quantities by \eqref{l} and the isometry $R$, $L$ respectively.\\
\\ 
Observe that at $t=1$ we have $\Sigma(1) = \Sigma$ and here the tangent and hazard approach are the same. For $t<1$ our hazard rate approach leads to pointwise 
limit distributions of the underlying price process. In the next step their process structure is studied and we will connect the price processes with filtered 
likelihood processes. To motivate this let $t \mapsto \hat{F}_n(t)$ denote the empirical process of $n$ i.i.d. uniformly distributed random variables. Under 
$Q_{i,n}$ the corresponding normalized empirical process
\begin{equation}\label{empirical}
 \sqrt{n} \left(\hat{F}_n(t)+\frac 1{\sqrt n}\int_0^t g_i(u) du - t\right) \rightarrow B_0(t) + \int_0^t g_i(u) du =: Y_t
\end{equation}
converge weakly on $D[0,1]$ to a shifted standard Brownian bridge $B_0$ where \\
$t \mapsto \int_0^t g_i(u) du$ is an unknown signal. Consider for a moment a model with one asset, i.e. $d=1$. Define the distribution
\begin{equation}
 \nu_{g_1}:=\mathcal{L}((Y_t)_{t \in [0,1]}|g_1) \textnormal{ on } C[0,1]
\end{equation}
under the parameter $g_1$.
 
\begin{corollary}
 The limit experiments $E=(Q_1,Q)$ and $E'=(\nu_{g_1},\nu_0)$ are equivalent.
\end{corollary}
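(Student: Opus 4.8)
The plan is to compute the likelihood ratio $d\nu_{g_1}/d\nu_0$ explicitly via the Cameron--Martin (Girsanov) theorem for the Brownian bridge and to recognise it as the Gaussian shift of $E$. First I record the two experiments. Since $d=1$, formula \eqref{dens1} gives $E=(Q_1,Q)$ with $Q=N(0,\sigma_1^2)$, $Q_1=N(\sigma_1^2,\sigma_1^2)$, $\sigma_1^2=\int g_1^2\,dP_0$, and likelihood ratio $\tfrac{dQ_1}{dQ}=\exp(\iota-\tfrac12\sigma_1^2)$, $\iota$ the canonical coordinate on $\real$, so $\mathcal L(\iota\mid Q)=N(0,\sigma_1^2)$ and $\mathcal L(\iota\mid Q_1)=N(\sigma_1^2,\sigma_1^2)$. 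On the other side $\nu_0=\mathcal L(B_0)$ is the bridge law on $C[0,1]$ and $\nu_{g_1}$ is the law of $B_0+m$, $m(t):=\int_0^t g_1(u)\,du$. Since two dominated binary experiments with mutually absolutely continuous components agree up to equivalence as soon as the distribution of the likelihood ratio under the null component agrees (the law under the alternative is then forced to have $\mu_1$-density $y\mapsto y$, the very fact used in the proof of Theorem~\ref{call}), it suffices to prove
\[
 \mathcal L\!\left(\frac{d\nu_{g_1}}{d\nu_0}\,\Big|\,\nu_0\right)=\mathcal L\!\left(\frac{dQ_1}{dQ}\,\Big|\,Q\right).
\]

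Next I would carry out the Cameron--Martin step. As $g_1$ is bounded, $\dot m=g_1\in L_2[0,1]$; moreover $m(0)=0$ and $m(1)=\int_0^1 g_1\,du=\int g_1\,dP_0=0$ because $g_1\in\Theta$, so $m$ belongs to the Cameron--Martin space $H$ of the Brownian bridge, the reproducing kernel Hilbert space of the covariance $(s,t)\mapsto s\wedge t-st$, namely $\{h\in AC[0,1]:h(0)=h(1)=0,\ \dot h\in L_2\}$ with $\langle h,k\rangle_H=\int_0^1\dot h\,\dot k\,dt$, and therefore $\|m\|_H^2=\int_0^1 g_1^2\,dt=\int g_1^2\,dP_0=\sigma_1^2$, Lebesgue measure on $[0,1]$ being $P_0$. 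The Cameron--Martin theorem then gives $\nu_{g_1}\sim\nu_0$ together with
\[
 \frac{d\nu_{g_1}}{d\nu_0}=\exp\!\left(W-\tfrac12\sigma_1^2\right),\qquad W:=\int_0^1 g_1(t)\,dB_0(t),
\]
the Gaussian linear functional attached to $m$. Writing $B_0(t)=\beta(t)-t\beta(1)$ with $\beta$ a standard Brownian motion and using $\int_0^1 g_1\,dt=0$ one gets $W=\int_0^1 g_1\,d\beta$, so $\mathcal L(W\mid\nu_0)=N(0,\sigma_1^2)$, while the shift built into the Cameron--Martin theorem gives $\mathcal L(W\mid\nu_{g_1})=N(\|m\|_H^2,\sigma_1^2)=N(\sigma_1^2,\sigma_1^2)$.

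Comparing the two paragraphs, $\exp(W-\tfrac12\sigma_1^2)$ has under $\nu_0$ the same law as $\exp(\iota-\tfrac12\sigma_1^2)$ has under $Q$ (and equally under the alternatives $\nu_{g_1}$, $Q_1$), which is the displayed identity; hence $E$ and $E'$ are equivalent. The only step that is not routine is the Cameron--Martin computation: pinning down $H$, checking $m\in H$ --- this is precisely where the constraint $\int g_1\,dP_0=0$ defining $\Theta$ is essential, as it forces $m(1)=0$ and so keeps the shift among the admissible directions of the bridge (without it $\nu_{g_1}$ and $\nu_0$ would be mutually singular) --- and evaluating $\|m\|_H^2$ and the law of $W$. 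One can avoid quoting the bridge version of Cameron--Martin by instead substituting $B_0(t)=\beta(t)-t\beta(1)$ and applying the classical Girsanov theorem to $\beta$ together with $\int_0^1 g_1=0$.
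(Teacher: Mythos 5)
Your proposal is correct and follows essentially the same route as the paper, which also derives the Girsanov/Cameron--Martin formula $\frac{d\nu_{g_1}}{d\nu_0}=\exp\bigl(\int_0^1 g_1\,dB_0-\tfrac12\int_0^1 g_1^2\,du\bigr)$ and then matches the likelihood distributions with those of $E$. You merely supply the details the paper leaves implicit --- in particular the role of $\int g_1\,dP_0=0$ in keeping the shift inside the Cameron--Martin space of the bridge and the computation of the law of $\int_0^1 g_1\,dB_0$ --- which is a useful but not a different argument.
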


The proof is based on the Girsanov formula
\begin{equation}
 \frac{d\nu_{g_1}}{d\nu_0}=exp\left( \int_0^1 g_1 dB_0 - \frac{\int_0^1 g_1(u)^2 du}2\right).
\end{equation}
Now it is easy to see that the likelihood distributions of $E$ and $E'$ coincide.\\
\\
We see that at time $t=1$ the limit experiment is given by the signal detection model with noise part $B_0$ which is motivated by  \eqref{empirical}. However, 
$B_0$ is not appropriate for the sequential financial setup. Next we will see that the hazard reparametrization by hazard quantities works well for the sequential 
approach. Recall that the Doob Meyer decomposition of $B_0$ w.r.t. $\mathcal{F}_t = \sigma(B_0(s) : s \le t)$ is given by
\begin{equation}\label{b0}
 B_0(t) = B(t) - \int_0^t \frac{B_0(s)}{1-s} ds,
\end{equation}
where $B(t)$ is a standard Brownian motion. For each $\gamma \in L_2(P_0)$ let
\begin{equation}\label{signal}
 \xi_t = B(t) + \int_0^t \gamma(u) du, \quad t \in [0,1],
\end{equation} 
denote the signal detection model with noise part $B(t)$. The Doob Meyer decomposition of the empirical process was studied by Khmaladze (1981), see also 
Doob (1949), which introduces the empirical counterpart of \eqref{b0}.

\begin{remark}
 Introduce $\mu_{\gamma} = \mathcal{L}((\xi_t)_{t \in [0,1]}|\gamma)$ on $C[0,1]$ under the parameter $\gamma$. Then the experiments
 $$E=(Q_1,Q) \sim E'=(\nu_{g_1},\nu_0) \sim (\mu_{\gamma_1},\mu_0) =:F$$ are equivalent in the sense of Le Cam and Yang (2000) when $R(g_1)=\gamma_1$ holds.
\end{remark}
The result follows again from Girsanov's formula
\begin{equation}\label{girsanov}
 \frac{d\mu_{\gamma}}{d\mu_0} = exp \left(\int_0^1 \gamma dB - \frac{\int_0^1 \gamma^2(u)du}2 \right)
\end{equation}
and the isometry property of $R$.\\
\\

In contrast to $Y_t$ the process $(\xi_t)_t$ is a martingale under the change of measure given by $\mu_{- \gamma}$. The equivalence of the different signal models 
$E'$ and $F$ can easily be motivated as follows. Suppose that $g=L(\gamma)$ holds. Then formally \eqref{b0} also holds for the signal alternatives, i.e.
\begin{equation}
 Y_t = B_0(t) + \int_0^t L(\gamma)(u)du = B(t)+ \int_0^t \gamma(u)du - \int_0^t \frac{Y_u}{1-u} du = \xi_t - \int_0^t \frac{Y_u}{1-u} du.
\end{equation}
This decomposition describes the martingale aspects of $Y_t$. We will see that $(\xi_t)_{t \in [0,1]}$ contains all statistical information about
$(Y_t)_{t \in [0,1]}$.\\
\\
The signal model \eqref{signal} and their distributions \eqref{girsanov} now allow for a new interpretation of the experiment $E(t)$ given in Corollary 
\ref{conv2}. We restrict ourselves to $d=1$ asset.

\begin{remark}
 Assume $\gamma_1 = R(g_1)$.\\
(a) We have equivalence of the experiments $E(t) \sim (\mu_{\gamma_1 \ind_{[0,t]}}, \mu_0)$ for each $t \in [0,1]$. Observe that the binary regression model 
$(B_t + \int_0^t \gamma_1(u)du, B_t)$ stands behind this binary experiment.\\
(b) Let $\mathcal{F}_t$ be the canonical filtration given by $(B_t)_{t \in [0,1]}$. Let $\mathbb{X} = (X_t^1)_{t \in [0,1]}$ be the It\^{o} type financial model
with
\begin{equation}\label{ito}
 X_t^1 = exp\left(\int_0^t \gamma_1 dB - \frac{\int_0^t \gamma_1^2(u)du}2 \right) = \frac{d\mu_{\gamma_1|\mathcal{F}_t}}{d\mu_{0|\mathcal{F}_t}}
 =\frac{d\mu_{\gamma_1 \ind_{[0,t]}}}{d\mu_0}
\end{equation} 
corresponding to the experiment of the right hand side of (a). According to Proposition \ref{conv1} the financial model $\mathbb{X}$ is the limit model of 
$\mathbb{X}_n$ of \eqref{price} for $d=1$.
\end{remark}

At present we have weak convergence of the price processes $X_n^1(t)$ only for fixed $t$. On the side of statistical experiments it corresponds to the 
LAN expansion \eqref{lan} of $E_n(t)$ where at time $t$ the parameter $g_1$ is substituted by $E(g_1|\mathcal{F}_{1,t}) = L(\gamma_1 \ind_{[0,t]})$ when 
$g_1 = L(\gamma_1)$ 
holds. The convergence of filtered likelihood processes is given by the so called local asymptotic Wiener property (LAW) which goes back to So and Sen (1981), 
see also Milbrodt (1990). For $d=1$ it reads for our binary experiments as follows.

\begin{definition}\label{lawcond}
 Let $B$ be a standard Brownian motion and let $t \mapsto \sigma^2(t)$ be right continuous non-decreasing on $[0,1]$ with $\sigma^2(0)=0$. The filtered 
 experiments $E_n(t) = (Q_{1,n|\mathcal{F}_{n,t}},Q_{0,n|\mathcal{F}_{n,t}})$ are said to have LAW property when the following conditions hold. There exist 
 stochastic processes $Z_n(t)$ and $\sigma_n^2(t)$ with\\
 (i) $\textnormal{sup}_{t \in [0,1]} \Big| log \frac{dQ_{1,n|\mathcal{F}_{n,t}}}{dQ_{0,n|\mathcal{F}_{n,t}}} 
 - \left[ Z_n(t) - \frac{\sigma_n^2(t)}2\right] \Big| \rightarrow 0$ in $Q_{0,n}$ probability as $n \rightarrow \infty$.\\
 (ii) $Z_n(\cdot) \rightarrow B(\sigma^2(\cdot))$ in $D[0,1]$ under $Q_{0,n}$.\\
 (iii) $\sigma_n^2(\cdot) \rightarrow \sigma^2(\cdot)$ in $D[0,1]$ under $Q_{0,n}$.
\end{definition}

\begin{theorem}\label{law}
 Consider $g_1 = L(\gamma_1)$ for dimension $d=1$. The price process \eqref{price} (filtered likelihood process)
 \begin{equation}
  X_{n,t}^1=log \frac{dQ_{1,n |\mathcal{F}_{n,t}}}{dQ_{0,n |\mathcal{F}_{n,t}}}
 \end{equation}
has LAW property with $\sigma^2(t) = \int_0^t \gamma^2(u)du$ and
\begin{equation}
 Z_n(t,x) = \frac 1{\sqrt n} \sum_{i=1}^n L(\gamma_1 \ind_{[0,t]})(x_i) \rightarrow \int_0^t \gamma dB
\end{equation}
and 
\begin{equation}
 X_{n,t}^1 \rightarrow X_t^1
\end{equation}
for $X_t^1$ of \eqref{ito} both in $D[0,1]$ under $Q_{0,n}$ as $n \rightarrow \infty$.
\end{theorem}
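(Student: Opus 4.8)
The plan is to verify directly the three conditions of Definition \ref{lawcond} with the explicit choices
\[
 Z_n(t,x) = \frac 1{\sqrt n}\sum_{i=1}^n h_t(x_i), \quad \sigma_n^2(t,x) = \frac 1n\sum_{i=1}^n h_t(x_i)^2, \quad h_t := L(\gamma_1\ind_{[0,t]}),
\]
and then to read off the convergence of the price process by the continuous mapping theorem. Two boundedness facts are used throughout. First, $g_1 \in \Theta$ is bounded, hence $\gamma_1 = R(g_1)$ is bounded, since $\big|\int_x^1 g_1\,dP_0\big| \le \|g_1\|_\infty(1-x)$. Second, $h_t$ is bounded uniformly in $t$ and in its argument: from the formula for $L$ one has $h_t(x) = \gamma_1(x)\ind_{[0,t]}(x) - \int_0^{x\wedge t}\frac{\gamma_1(u)}{1-u}\,du$ and $\int_0^s\frac{\gamma_1(u)}{1-u}\,du = \gamma_1(s) - g_1(s)$, so $\|h_t\|_\infty \le 2\|\gamma_1\|_\infty + \|g_1\|_\infty =: C$ for every $t$. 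Finally recall \eqref{l}, i.e.\ $h_t = E_{P_0}(g_1\mid\mathcal{F}_{1,t})$, so that $(h_t)_{t\in[0,1]}$ is a bounded $P_0$-martingale and $R(h_t) = \gamma_1\ind_{[0,t]}$.

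\emph{Condition (i).} By \eqref{pp1}, $\log\frac{dQ_{1,n|\mathcal{F}_{n,t}}}{dQ_{0,n|\mathcal{F}_{n,t}}} = \sum_{i=1}^n \log\big(1 + h_t(x_i)/\sqrt n\big)$. For $n$ large enough that $C/\sqrt n < 1/2$, the expansion $\log(1+u) = u - u^2/2 + r(u)$ with $|r(u)| \le \tfrac 83|u|^3$ on $[-\tfrac12,\tfrac12]$ gives
\[
 \sup_{t\in[0,1]}\Big|\log\tfrac{dQ_{1,n|\mathcal{F}_{n,t}}}{dQ_{0,n|\mathcal{F}_{n,t}}} - \big(Z_n(t) - \tfrac 12\sigma_n^2(t)\big)\Big| \le \tfrac 83\,n\big(C/\sqrt n\big)^3 = \frac{8C^3}{3\sqrt n} \longrightarrow 0,
\]
which is deterministic, hence a fortiori holds in $Q_{0,n}$-probability.

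\emph{Conditions (iii) and (ii).} These are a uniform law of large numbers and a functional central limit theorem for the empirical measure of $x_1,\dots,x_n$ (i.i.d.\ $P_0$ under $Q_{0,n}$) along the one-parameter family $\{h_t : t\in[0,1]\}$. For (iii), $\sigma_n^2(t)$ is the empirical mean of $h_t^2$ with $E_{P_0}(h_t^2) = \|h_t\|_{L_2(P_0)}^2 = \|R(h_t)\|_{L_2(P_0)}^2 = \|\gamma_1\ind_{[0,t]}\|_{L_2(P_0)}^2 = \int_0^t\gamma_1^2(u)\,du = \sigma^2(t)$, using that $R$ is an isometry; the family $\{h_t^2 : t\in[0,1]\}$ is uniformly bounded and $L_2(P_0)$-continuously indexed by the compact set $[0,1]$, hence Glivenko--Cantelli, so $\sup_t|\sigma_n^2(t) - \sigma^2(t)| \to 0$ in $Q_{0,n}$-probability, and the limit $\sigma^2$ is continuous, non-decreasing with $\sigma^2(0)=0$, as required. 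For (ii), the $P_0$-martingale property of $(h_t)_t$ makes $(Z_n(t))_t$ a $Q_{0,n}$-martingale with $\mathrm{Cov}_{Q_{0,n}}(Z_n(s),Z_n(t)) = E_{P_0}(h_sh_t) = E_{P_0}(h_{s\wedge t}^2) = \sigma^2(s\wedge t)$; since $\{h_t\}$ is uniformly bounded with $\|h_t - h_s\|_{L_2(P_0)}^2 = \sigma^2(t) - \sigma^2(s)$ for $s\le t$, it has finite bracketing integral and is $P_0$-Donsker, so $Z_n(\cdot)$ converges in $D[0,1]$ under $Q_{0,n}$ to the centred Gaussian process with covariance $\sigma^2(s\wedge t)$, i.e.\ to $B(\sigma^2(\cdot))$, equivalently $\int_0^{\cdot}\gamma_1\,dB$. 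This establishes the LAW property.

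\emph{Conclusion and main obstacle.} Since $\sigma^2$ is deterministic, $(Z_n,\sigma_n^2) \to (B(\sigma^2(\cdot)),\sigma^2(\cdot))$ jointly in $D[0,1]^2$, so by (i) the log-likelihood process converges in $D[0,1]$ to $B(\sigma^2(\cdot)) - \tfrac 12\sigma^2(\cdot) = \int_0^{\cdot}\gamma_1\,dB - \tfrac 12\int_0^{\cdot}\gamma_1^2(u)\,du = \log X_t^1$, with $X_t^1$ as in \eqref{ito}; applying the continuous map $\exp$ on $D[0,1]$ yields $X_{n,t}^1 \to X_t^1$ in $D[0,1]$ under $Q_{0,n}$. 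I expect the delicate point to be condition (ii): upgrading the (easy) finite-dimensional martingale CLT to tightness in $D[0,1]$. The cleanest routes are Donsker's theorem for the class $\{h_t\}$, or Rebolledo's martingale functional CLT together with the Lindeberg condition (immediate from $\|h_t\|_\infty \le C$); in both one must control $\int_0^{x\wedge t}\gamma_1(u)/(1-u)\,du$ near $u=1$, which is exactly where the hypothesis that $g_1$ (equivalently $\gamma_1$) is bounded is used.
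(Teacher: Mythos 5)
Your overall strategy is the same as the paper's: verify conditions (i)--(iii) of Definition \ref{lawcond} for $Z_n(t)=n^{-1/2}\sum_{i}L(\gamma_1\ind_{[0,t]})(x_i)$ and then obtain $X_{n,\cdot}^1\to X_\cdot^1$ by continuity of $\exp$. The implementations differ in two places. The paper \emph{defines} $\sigma_n^2(t)$ as the exact residual $2\bigl(Z_n(t)-\log\frac{dQ_{1,n|\mathcal F_{n,t}}}{dQ_{0,n|\mathcal F_{n,t}}}\bigr)$, so its condition (i) holds identically, and the work is shifted to (iii): it writes $x-\log(1+x)=\frac{x^2}{2}(1+\tilde R(x))$ and proves $\frac1n\sum_i M_i^2(t,x_i)\to\sigma^2(t)$ uniformly via the Doob--Meyer decomposition $M_i^2=\tilde M_i+A_i$, killing the martingale part with Doob's maximal inequality and handling the non-decreasing compensator with the SLLN plus P\'olya's theorem. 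You instead take $\sigma_n^2(t)$ to be the empirical second moment $\frac1n\sum_i h_t(x_i)^2$ and get (i) from a cubic Taylor remainder, which is clean and deterministic; that is a legitimate variant, and your identification of $E_{P_0}(h_sh_t)=\sigma^2(s\wedge t)$ via the isometry is exactly right. For (ii) both you and the paper ultimately lean on a martingale functional CLT (Rebolledo, or Loynes' tightness criterion).

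The one step that does not stand as written is your justification of the uniform LLN in (iii): ``uniformly bounded and $L_2(P_0)$-continuously indexed by the compact set $[0,1]$, hence Glivenko--Cantelli'' is not a valid implication. $L_2$-total boundedness of a uniformly bounded class does not yield a uniform law of large numbers: the indicators of finite subsets of $[0,1]$ all lie at $L_2(P_0)$-distance $0$ from the zero function, yet $\sup_A|P_n(A)-P_0(A)|=1$. One needs control of the pointwise structure of the family, not merely its $L_2$-geometry. Here this is available in two ways: construct explicit brackets using the fact that $t\mapsto h_t(x)$ is absolutely continuous in $t$ except for a single jump of size $\gamma_1(x)$ at $t=x$ (with some care as $t\to1$, where the factor $(1-u)^{-1}$ in $L$ blows up --- precisely the point you flag at the end), or follow the paper's Doob--Meyer route, where monotonicity of the compensator upgrades pointwise to uniform convergence for free. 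The same caveat applies to the parenthetical ``finite bracketing integral'' assertion in your treatment of (ii); the martingale CLT alternative you also mention sidesteps it and is what the paper uses. With either repair the argument is complete.
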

The proof is given in the appendix.\\
\\
This result is of importance for pricing path dependent options. Together with the statistical interpretation by power functions of suitable tests the result 
opens the door for a statistical approach to their option prices.\\
\\
Until now the limit model has deterministic volatility and it is up to a time transformation a financial geometric Brownian motion model. As pointed out earlier 
more general filtered experiments have been studied in the literature. We will summarize some of them.\\
\\
(II) Models with random volatility\\
Below we study so called local asymptotic mixed normal (LAMN) families, see Le Cam and Yang (2000), which naturally show up as hazard based financial models with stochastic volatility. These 
type of limit experiments frequently occur when the parameters are close to the boundary of the parameter space. As in \eqref{likelihood1} and \eqref{pp1} we 
consider again a parameter $g_1 \in \Theta$ with $|g_1| \le K$ given by $L(\gamma_1) = g_1$. Suppose that in addition $Y$ is a $\mathcal{F}_0$-measurable random 
variable which is independent of $B$. Suppose that
$$E(exp(Y^2K^2/2)) < \infty$$
holds. Then
\begin{equation}\label{price2}
 X_t^1 = \frac{d\tilde{Q}_{1|\mathcal{F}_t}}{d\tilde{Q}_{|\mathcal{F}_t}}
 =exp\left(Y \int_0^t \gamma_1 dB - \frac{Y^2\int_0^t \gamma_1^2 du}2\right), \quad t \in [0,1]
\end{equation}
is a financial experiment given by $\tilde{E}:=(\tilde{Q},\tilde{Q}_1)$ with the distribution $\tilde{Q}$ of $B$ and $\tilde{Q}_1$ via \eqref{price2} for $t=1$. 
Observe that $Y$ can be viewed as a random scale parameter which is determined in advance at time $t=0$. Suppose now that the sequence 
$|Y_n| \le \frac{\sqrt n}K$ of random variables is distributional convergent $Y_n \rightarrow Y$ where $Y_n$ is independent of the underlying uniform 
distributions in \eqref{path} - \eqref{lan}. In the model \eqref{pp1} we can now insert $Y_n$ as an additional term, i.e.
\begin{equation}\label{pp2}
 X_{n,t}^1 = \frac{d\tilde{Q}_{1,n |\mathcal{F}_{n,t}}}{d\tilde{Q}_{0,n |\mathcal{F}_{n,t}}}(x) = \prod_{j=1}^n \left(1+\frac{Y_nL(\gamma_1 \ind_{[0,t]})(x_j)}{\sqrt{n}}\right), 
 \quad t \in [0,1],
\end{equation}
where the $x$'s are i.i.d. uniformly distributed under $\tilde{Q}_{0,n}$ as in \eqref{pp1}. We see that \eqref{pp2} arises from the experiment 
$\tilde{E}_n=(\tilde{Q}_{1,n},\tilde{Q}_{0,n})$. The limit experiment of $\tilde{E}_n$ is just $\tilde{E}$ with the limit price process \eqref{price2} of 
\eqref{pp2}. Similarly as in Theorem \ref{law}, a functional limit theorem is obtained for \eqref{pp2} in the sense of Definition \ref{lawcond}. To prove this 
observe that on a new probability space we may assume that $Y_n \rightarrow Y$ holds a.e.. If we then condition under the values of $Y_n$ Theorem \ref{law} 
implies convergence in $D[0,1]$.

\begin{remark}
 It is known from Shiryaev and Spokoiny (2000), sect. 3.7 that stochastic differential equations of the type
 $$dX_t = f_t(X,\theta)dt +dB_t, \quad t \in [0,T]$$
generate a statistical experiment. Under regularity conditions the process $(X_s)_{s \le t}$ corresponds to the filtered distributions
$$\frac{dP_{\theta,t}}{dP_0} = exp\left( \int_0^t f_s(\cdot,\theta)dB - \frac 1 2 \int_0^t f_s(\cdot,\theta)^2ds\right)$$
which fits well into the concept of filtered financial experiments.
\end{remark}
Filtered financial experiments are linked to the so called $(\gamma,\Gamma)$-models of Shiryaev and Spokoiny (2000). They have an interpretation as financial 
model via our duality proposition \ref{equiv}. As illustration consider the following example.

\begin{example}[Shiryaev and Spokoiny (2000), Ex. 3.8]
 Let $M = (M_t)_{t \in [0,T]}$ be a $d$-dimensional square integrable $Q$-martingale w.r.t. a filtration $(\mathcal{F}_t)_{t \in [0,T]}$. Consider given values 
 $x_1,...,x_d \in \mathbb{R}^d$ and assume
 $$E_Q(exp(<x_i,<M>_T x_i>/2) < \infty, \quad i \le d,$$
 where $<M>$ is the quadratic characteristic of $M$. Then
 $$\frac{dQ_i}{dQ}:=exp\left(<x_i,M_T> - \frac{<x_i, <M>_T x_i>}2 \right)$$
 defines a filtered financial experiment $E=(Q_1,...,Q_d,Q)$, $(\mathcal{F}_t)_{t \in [0,T]}$ with the associated price process
 $$X_t^i=\frac{dQ_{i|\mathcal{F}_t}}{dQ_{|\mathcal{F}_t}}:=exp\left(<x_i,M_t> - \frac{<x_i, <M>_t x_i>}2 \right).$$
\end{example}

\section{Beyond arbitrage free financial models}\label{section4}

One might have the impression that the link between the likelihood and financial models only exists when martingale measures are present. The following example 
illustrates that more general examples exist. During the last ten years various authors used a fractional Brownian motion $B_H$ as driving stochastic process in 
finance, see for instance Sottinen and Valkeila (2003) and Hu and \O{}ksendal (2003) among others. Recall that $B_H$ is a centered Gaussian process on 
$C(\mathbb{R})$ with covariance
\begin{equation}
 R(s,t) = \frac 1 2 \left(|t|^{2H} + |s|^{2H} - |t-s|^{2H} \right), \quad s,t \in \mathbb{R},
\end{equation}
where $H \in (0,1)$ is the Hurst index. It was introduced by Kolmogorov (1940). Throughout we will concentrate on the index $1/2 < H < 1$. It is well known that 
$B_H(t)-\frac 1 2 |t|^{2H}$ is the log-likelihood of an important statistical experiment 
$E_H:=(C(\mathbb{R}), \mathcal{B}(C(\mathbb{R})),$ $(Q_t)_{t \in \mathbb{R}})$ given by
\begin{equation}\label{fractional}
 X_t^1 := \frac{dQ_t}{dQ_0}:=exp\left( B_H(t)-\frac 1 2 |t|^{2H}\right).
\end{equation}
Let $B_H$ be a fractional Brownian motion under $Q_0$. It is remarkable that $B_H$ is no semimartingale and martingale arguments do not work when dealing with 
$(X_t^1)_{t \in [0,T]}$. However, $E_H$ is an important Gaussian limit experiment, see Janssen, Milbrodt and Strasser (1985), p. 19ff for an introduction to 
Gaussian experiments. 
Pflug (1982) studied statistical properties of $E_H$, in particular maximum likelihood estimation. It was first pointed out by Prakasa Rao (1968) that $E_H$ is 
the limit experiment of a product of independent regression models
\begin{equation}
 Y_i = \frac t{n^{H-1/2}} + \varepsilon_i, \quad 1 \le i \le n
\end{equation}
when the $\varepsilon_i$ are real i.i.d. random variables with densities $f(x)=C(H) exp\left(-|x|^{2H}\right)$. In other words the product experiment
\begin{equation}
 X_{n,t}^1 = \frac{dP_{t/n^{H-1/2}}^n}{dP_0^n}(x)= \prod_{i=1}^n exp\left( -|x_i - \frac t{n^{H-1/2}}|^{2H} + |x_i|^{2H}\right)
\end{equation}
given by $P_t$ with densities $f(x-t)$ converges to $X_t^1$ given by $E_H$. Thus $(X_{n,t}^1)_{t \in [0,1]}$ has also an interpretation as a sequence of 
financial models with fractional Brownian motion \eqref{fractional} as limit.\\
Notice that nothing is said about martingale measures. More information about the domain of attraction of the statistical experiment $E_H$ can be found in the 
section $A4$ about ``convergence of non-regular experiments to Gaussian experiments'' which is contained in the appendix of Janssen and Mason (1990).

\begin{appendix} 
\section{Appendix}

\textbf{Proof of Theorem \ref{law}:}
All considerations below are carried out for the product of the uniform distribution $Q_{n,0}$. Introduce
$$M_i(t,x_i)=L(\gamma_1 \ind_{[0,t]} | \mathcal{F}_{n,t})(x_i)=E(g_1 |\mathcal{F}_{n,t})(x_i)$$
which are mean zero independent martingales for $i \le n$. The martingale
\begin{equation}
 Z_n(t,x):= \frac 1{\sqrt n} \sum_{i=1}^n M_i(t,x_i)
\end{equation}
has the desired variance $Var(Z_n(t,\cdot))=\sigma^2(t)$ by the isometry $L$.\\
Observe first that condition (ii) of definition \ref{lawcond} follows from the martingale central limit theorem on $D[0,1]$. Here either Rebolledo's martingale 
central limit theorem can be applied or the tightness condition of Loynes (1976) can be verified. The details are easy to prove. We refer to Andersen, Borgan, 
Gill and Keiding (1993), p. 83 or to Rebolledo (1980) for the martingale central limit theorem. Observe that $Z_n(t,\cdot)$ is $Q_{n,0}$ uniformly integrable. 
In order to check condition (ii) we first claim that
\begin{equation}\label{conv3}
 \frac 1 n \sum_{i=1}^n M_i^2(t,x_i) \rightarrow \sigma^2(t)
\end{equation}
is convergent in $D[0,1]$. For this purpose let $M_i(t,\cdot)^2 = \tilde{M}_i(t,\cdot) + A_i(t,\cdot)$ be the Doob-Meyer decomposition of the submartingale 
$M_i^2$. Here $\tilde{M}_i$ are square integrable mean zero martingales and $t \mapsto A_i(t,\cdot)$ is non-decreasing with $E(A_i(t\cdot))=\sigma^2(t)$. The 
proof of \eqref{conv3} splits into two parts. Doob's inequality implies
$$sup_{t \in [0,1]} P\left(\Big|\frac 1 n \sum_{i=1}^n \tilde{M}_i(t,\cdot) \Big| \ge \varepsilon \right) 
\le \frac 1 {\varepsilon^2} \frac{Var(\tilde{M}_1(1,\cdot))}n$$
and
$$sup_{t \in [0,1]} \Big| \frac 1 n \sum_{i=1}^n \tilde{M}_i(t,\cdot) \Big| \rightarrow 0$$
in probability.\\
On the other hand the strong law of large numbers proves
\begin{equation}
 \frac 1 n \sum_{i=1}^n A_i(t,\cdot) \rightarrow \sigma^2(t) \textnormal{ a.e.}
\end{equation}
first for fixed $t$. The same holds a.e. for all $t \in [0,1] \cap \mathbb{Q}$. Since $t \mapsto A_i(t,\cdot)$ is non-decreasing we get convergence a.e. 
uniformly in $t$ by using Polya's theorem and the continuity of $t \mapsto \sigma^2(t)$. Together with the first part \eqref{conv3} follows.\\
The third condition (iii) of the LAW property is based on the Taylor expansion
\begin{equation}
 x - log(1+x) = \frac{x^2}2 (1 + \tilde{R}(x))
\end{equation}
with the remainder term $\tilde{R} \rightarrow 0$ as $t \rightarrow 0$. Define
\begin{equation}
 \frac{\sigma_n^2(t)} 2 = Z_n(t) - log \frac{dQ_{1,n|\mathcal{F}_{n,t}}}{dQ_{0,n|\mathcal{F}_{n,t}}}
 = \sum_{i=1}^n \left[ \frac{M_i(t,x_i)}{\sqrt n} - log\left(1 +\frac{M_i(t,x_i)}{\sqrt n}\right) \right].
\end{equation}
Since $|g| \le K$ is bounded by our assumptions we have $|E(g|\mathcal{F}_{n,t})| \le K$ and thus
$$sup_{t \in [0,1], i \le n} \Big| \tilde{R}\left( \frac{M_i(t,\cdot)}{\sqrt n} \right)\Big| \rightarrow 0.$$
By \eqref{conv3} we have 
\begin{eqnarray*}
 sup_{t \in [0,1]} \Big| \frac{\sigma_n^2(t)} 2 - \frac 1{2n} \sum_{i=1}^n M_i^2(t,x_i) \Big| 
&\le& sup_{t \in [0,1]}\Big|\frac 1{2n} \sum_{i=1}^n M_i^2(t,x_i) \Big| \\
&\cdot& sup_{t \in [0,1], i \le n} \Big| \tilde{R}\left( \frac{M_i(t,\cdot)}{\sqrt n} \right)\Big|
\end{eqnarray*}
with the right hand side converging to $0$ in probability. The arguments finish the proof of the LAW property.
\hspace*{\fill}\begin{small}$\square$\end{small}
\end{appendix}

\bibliographystyle{plain}

\bigskip
\noindent
\parbox[t]{.48\textwidth}{
Arnold Janssen\\
Heinrich-Heine-Universit\"at D\"usseldorf \\
Universit\"atsstr. 1\\
40225 D\"usseldorf, Germany\\
janssena@math.uni-duesseldorf.de } \hfill
\parbox[t]{.48\textwidth}{
Martin Tietje\\
Heinrich-Heine-Universit\"at D\"usseldorf \\
Universit\"atsstr. 1\\
40225 D\"usseldorf, Germany\\
tietje@math.uni-duesseldorf.de }

\end{document}